\begin{document}

\newtheorem{theorem}{Theorem}[section]
\newtheorem{corollary}[theorem]{Corollary}
\newtheorem{definition}[theorem]{Definition}
\newtheorem{conjecture}[theorem]{Conjecture}
\newtheorem{question}[theorem]{Question}
\newtheorem{lemma}[theorem]{Lemma}
\newtheorem{proposition}[theorem]{Proposition}
\newtheorem{example}[theorem]{Example}
\newenvironment{proof}{\noindent {\bf
Proof.}}{\rule{3mm}{3mm}\par\medskip}
\newcommand{\remark}{\medskip\par\noindent {\bf Remark.~~}}
\newcommand{\pp}{{\it p.}}
\newcommand{\de}{\em}

\newcommand{\JEC}{{\it Europ. J. Combinatorics},  }
\newcommand{\JCTB}{{\it J. Combin. Theory Ser. B.}, }
\newcommand{\JCT}{{\it J. Combin. Theory}, }
\newcommand{\JGT}{{\it J. Graph Theory}, }
\newcommand{\ComHung}{{\it Combinatorica}, }
\newcommand{\DM}{{\it Discrete Math.}, }
\newcommand{\ARS}{{\it Ars Combin.}, }
\newcommand{\SIAMDM}{{\it SIAM J. Discrete Math.}, }
\newcommand{\SIAMADM}{{\it SIAM J. Algebraic Discrete Methods}, }
\newcommand{\SIAMC}{{\it SIAM J. Comput.}, }
\newcommand{\ConAMS}{{\it Contemp. Math. AMS}, }
\newcommand{\TransAMS}{{\it Trans. Amer. Math. Soc.}, }
\newcommand{\AnDM}{{\it Ann. Discrete Math.}, }
\newcommand{\NBS}{{\it J. Res. Nat. Bur. Standards} {\rm B}, }
\newcommand{\ConNum}{{\it Congr. Numer.}, }
\newcommand{\CJM}{{\it Canad. J. Math.}, }
\newcommand{\JLMS}{{\it J. London Math. Soc.}, }
\newcommand{\PLMS}{{\it Proc. London Math. Soc.}, }
\newcommand{\PAMS}{{\it Proc. Amer. Math. Soc.}, }
\newcommand{\JCMCC}{{\it J. Combin. Math. Combin. Comput.}, }
\newcommand{\GC}{{\it Graphs Combin.}, }

\title{The Wiener and Terminal Wiener indices of trees \thanks{
This work is supported by National Natural Science Foundation of
China (No:11271256).  }}
\author{ Ya-Hong  Chen$^{1,2}$,  Xiao-Dong Zhang$^1$\thanks{Corresponding  author ({\it E-mail address:}
xiaodong@sjtu.edu.cn)}
\\
{\small $^1$Department of Mathematics, and  MOE-LSC,}\\
{\small Shanghai Jiao Tong University} \\
{\small  800 Dongchuan road, Shanghai, 200240,  P.R. China}\\
 {\small $^2$Department of Mathematics},
{\small Lishui University} \\
{\small  Lishui, Zhejiang 323000, PR China}}
\maketitle
 \begin{abstract}Heydari  \cite{heydari2013}  presented  very nice
 formulae
 for the Wiener and terminal Wiener indices of generalized Bethe trees. It
 is pity that there are some errors for the formulae. In this paper,
 we correct these errors and characterize all trees with the minimum
 terminal Wiener index among all the trees of order $n$ and with maximum degree $\Delta$.

   \end{abstract}

{{\bf Key words:} Wiener index ; terminal Wiener index; tree;
pendent vertex
 }

      {{\bf AMS Classifications:} 05C50, 05C07}.
\vskip 0.5cm

\section{Introduction}
  There are many molecular structure descriptors until now.
The Wiener index is one of the most widely known topological
descriptors, which has been much studied in both mathematical and
chemical literatures (for example, see\cite{dobrynin2001,
gutman-k-2, Gutman1997-k}). Through this paper, we only consider
finite, simple and undirected graphs. Let $G=(V(G), E(G))$ be a
simple connected graph of order $n$ with vertex set $V(G)$ and edge
set $E(G)$. The distance between vertices $v_i$ and $v_j$ is the
minimun  number of edges between $v_i$ and $v_j$  and denoted by
$d_G(v_i,v_j)$ (or for short $d(v_i,v_j)$). The {\it Wiener index}
of a connected graph $G$ is defined as the sum of distances between
all pairs of vertices:
\begin{eqnarray*}
 W(G)=\sum\limits_{{v_i,v_j}\subseteq V(G)}d(v_i,v_j)=\frac{1}{2}\sum\limits_{v\in
 V(G)}d_G(v),
\end{eqnarray*}
where $d_G(v)$ denotes the distance of a vertex $v$. For trees,
Wiener \cite{wiener1947} gave a very useful formula for the Wiener
index: \begin{eqnarray}\label{equ1} W(G)=\sum\limits_{e\in
T}n_1(e)n_2(e),
\end{eqnarray}
where $n_1(e)$ and $n_2(e)$ are the number of vertices of two
components of $T-e$.
  Recently, Smolenski et al. \cite{smolenskii2009} made use of terminal distance matrices to encode molecular structures.
Based on these applications, Gutman, Furtula and Petrovi\'{c}
\cite{gutman2009} proposed the concept of {\it terminal Wiener index}, which is defined as the sum of distances between all pairs of pendent vertices of trees:
\begin{eqnarray*}
 TW(T)=\sum\limits_{1\leq i<j\leq k} d_T(v_i,v_j),
\end{eqnarray*}
where $d_T(v_i,v_j)$  is the distance of two pendent vertices $v_i$
and $v_j$. Gutman  gave a similar formula for the terminal Wiener
index of trees
\begin{eqnarray}\label{eqn2}
 TW(T)=\sum\limits_{e\in T}p_1(e)p_2(e),
\end{eqnarray}
where $n_1(e)$ and $n_2(e)$ are the number of vertices of two
components of $T-e$.
  For more information on the Wiener and terminal Wiener indices, the readers may refer to
  the recent papers \cite{heydari2010,schmuck2012,szekely2011} and the references cited therein.

{\it A generalized Bethe tree} (see \cite{heydari2013}) is a rooted tree whose vertices at the same
 level have equal degrees. We agree that the root vertex is at level 1 and $T$ has $k$ levels, and denote the class of
generalized Bethe trees of $k$ levels by $\mathcal{B}_k$. {\it A
Bethe tree} $B_{k,d}$ is a rooted tree of $k$ levels in which the
root vertex has degree $d$, the vertices at  level $j(2\leq j\leq
k-1)$  have $d+1$ and the vertices at level $k$ are the pendent
vertices. A {\it regular dendrimer tree} $T_{k,d}$ is a generalized
Bethe tree of $k+1$ levels with each nonpendent vertex  having degree $d$. So a
regular dendrimer tree belongs to  $\mathcal{B}_{k+1}$.

  The rest of the paper is organized as follows. In section 2, we present
  some formulae for the Wiener index of generalized Bethe trees, which correct
  the errors of \cite{heydari2013}. In section 3,
  a formula for the  terminal Wiener indices of trees is obtained. With the formula,
  the terminal Wiener index of a general Bethe tree is presented, which
   corrects the errors of \cite{heydari2013}. In section 4, the trees with the minimum
terminal Wiener index among all the trees of order $n$ and with
maximum degree $\Delta$ are characterized.

\section{\large\bf{Wiener index of generalized Bethe trees}}

  Let $T_1,T_2,\cdots,T_m(m\geq2)$ be trees with disjoint vertex sets and
  orders $n_1,n_2,\cdots,$ $n_m$.
Let $w_i\in V(T_i)$ be the rooted vertex of $T_i$ for $i=1,2,\cdots,m$.
 A  tree $T$ on more than
two vertices can be regarded as being obtained by joining a new
vertex $w$ to each of the vertices $w_1,w_2,\cdots,w_m$. Canfield,
Robinson and Rouvray \cite{canfield1985} elaborated a recursive
approach for calculation of the Wiener index of a general tree.
Dobrynin Entringer and Gutman \cite{dobrynin2001} state the result
as the following theorem.

\begin{theorem}
\label{theorem1.1}
(\cite{dobrynin2001})
  Let $T$ be a  tree on $n\geq3$ vertices, whose structure is specified above. Then
   $$W(T)=\sum\limits_{i=1}^m[W(T_i)+(n-n_i)d_{T_i}(w_i)-n_i^2]+n(n-1), $$
  where $d_{T_i}(w_i)$ is the sum of distances between $w_i$ and all other vertices of $T_i$ for $1\leq i\leq m$.
\end{theorem}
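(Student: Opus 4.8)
The plan is to compute $W(T)=\sum_{\{u,v\}\subseteq V(T)}d_T(u,v)$ directly, splitting the unordered pairs of vertices according to which of the blocks $\{w\}$, $V(T_1),\dots,V(T_m)$ the two vertices lie in. Since $T$ is a tree, each distance is read off from the unique path joining the two endpoints: if $u$ and $v$ both lie in $V(T_i)$ then that path stays inside $T_i$, so $d_T(u,v)=d_{T_i}(u,v)$; if $u\in V(T_i)$ and $v\in V(T_j)$ with $i\ne j$ then the path runs from $u$ to $w_i$, then to $w$, then to $w_j$, then to $v$, so $d_T(u,v)=d_{T_i}(w_i,u)+d_{T_j}(w_j,v)+2$; and $d_T(w,v)=1+d_{T_i}(w_i,v)$ for $v\in V(T_i)$. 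Recall also that $n=1+\sum_{i=1}^m n_i$.

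First I would evaluate the three families of pairs separately. The pairs contained in a single branch contribute $\sum_{i=1}^m W(T_i)$. The pairs $\{w,v\}$ with $v\in V(T_i)$ contribute $\sum_{i=1}^m\sum_{v\in V(T_i)}\bigl(1+d_{T_i}(w_i,v)\bigr)=\sum_{i=1}^m\bigl(n_i+d_{T_i}(w_i)\bigr)=(n-1)+\sum_{i=1}^m d_{T_i}(w_i)$. For the cross pairs, summing over $u\in V(T_i)$ and $v\in V(T_j)$ for a fixed ordered pair $(i,j)$ of distinct indices gives $n_j\,d_{T_i}(w_i)+n_i\,d_{T_j}(w_j)+2n_in_j$; summing this over unordered index pairs $\{i,j\}$ and using $\sum_{j\ne i}n_j=n-1-n_i$ together with $\sum_{i\ne j}n_in_j=\bigl(\sum_i n_i\bigr)^2-\sum_i n_i^2=(n-1)^2-\sum_{i=1}^m n_i^2$, this family contributes $\sum_{i=1}^m(n-1-n_i)\,d_{T_i}(w_i)+(n-1)^2-\sum_{i=1}^m n_i^2$.

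Adding the three contributions, the coefficient of each $d_{T_i}(w_i)$ becomes $1+(n-1-n_i)=n-n_i$ and the constant terms combine as $(n-1)+(n-1)^2=n(n-1)$, which gives exactly $W(T)=\sum_{i=1}^m\bigl[W(T_i)+(n-n_i)d_{T_i}(w_i)-n_i^2\bigr]+n(n-1)$. The proof is essentially a bookkeeping exercise, so I do not expect a serious obstacle; the one place to stay careful is the passage between ordered and unordered pairs, both of the indices $i,j$ and of the vertices $u,v$, in the cross-branch sum, so that the additive constant $2$ and the double counting in $\sum_{i\ne j}$ are tracked consistently.

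As an independent check one may instead use Wiener's edge formula~(\ref{equ1}): the $m$ new edges $ww_i$ contribute $\sum_{i=1}^m n_i(n-n_i)=n(n-1)-\sum_{i=1}^m n_i^2$, while an edge of $T_i$ whose deletion splits $T_i$ into a part of size $a$ not containing $w_i$ and a part of size $n_i-a$ containing $w_i$ contributes $a(n-a)$ to $W(T)$ but only $a(n_i-a)$ to $W(T_i)$, a surplus of $a(n-n_i)$. Since a vertex of $T_i$ at distance $d$ from $w_i$ lies on the $w_i$-free side of exactly $d$ edges of $T_i$, one has $\sum_{e\in E(T_i)}a_e=d_{T_i}(w_i)$, so the branch edges contribute $\sum_{i=1}^m\bigl(W(T_i)+(n-n_i)d_{T_i}(w_i)\bigr)$ and the same identity falls out.
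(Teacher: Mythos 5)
The paper offers no proof of this statement at all: Theorem~\ref{theorem1.1} is quoted from the cited reference \cite{dobrynin2001} and used as a black box, so there is nothing internal to compare your argument against. That said, your proof is correct and complete. The three-way split of vertex pairs (both in one branch $T_i$; one endpoint equal to the new root $w$; endpoints in distinct branches) is the natural decomposition, the distance formulas $d_T(u,v)=d_{T_i}(w_i,u)+d_{T_j}(w_j,v)+2$ and $d_T(w,v)=1+d_{T_i}(w_i,v)$ are right, and the bookkeeping checks out: the coefficient of $d_{T_i}(w_i)$ is $1+(n-1-n_i)=n-n_i$, the cross terms give $(n-1)^2-\sum_i n_i^2$, and $(n-1)+(n-1)^2=n(n-1)$. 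Your second argument via the edge formula~(\ref{equ1}) is also valid --- the identity $\sum_{e\in E(T_i)}a_e=d_{T_i}(w_i)$, obtained by counting, for each vertex $v$, the $d_{T_i}(w_i,v)$ edges separating $v$ from $w_i$, is exactly the right observation --- and it is arguably the cleaner route in the context of this paper, since the authors rely on the edge formula throughout Section~2. Either argument would serve as a self-contained proof of the cited result.
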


  Since a generalized Bethe tree is the very special tree
  whose vertices have the same degree at the same level,
Heydari \cite{heydari2013} presented a formula for the Wiener index
of generalized Bethe trees. The result can be stated as follows:

\begin{theorem}
\label{theorem1.2}
(\cite{heydari2013})
  Let $B_{k+1}$ be a generalized Bethe tree of $k+1$ levels. If $d_1$ denotes the degree of rooted vertex
  and $d_i+1$ denotes degree of the vertices on $i$th level of  $B_{k+1}$ for $1<i<k$, then
  the Wiener index of $B_{k+1}$ is computed as follows:
    $$W(B_{k+1})=\sum\limits_{i=1}^k(n_{i+1}-1)m_i(n-m_i), $$
  where $n_{i+1}$ is the number of vertices  on the $(i+1)$th level of $B_{k+1}$ and
  $m_i$ is the number of all children vertices lying on one side of edge
  where adjacent a vertex on the $i$th level to
   another vertex on $(i+1)$th level of $B_{k+1}$ for $1\leq i\leq k$.
\end{theorem}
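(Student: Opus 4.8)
The plan is to work directly from Wiener's edge formula (\ref{equ1}), $W(B_{k+1})=\sum_{e\in B_{k+1}}n_1(e)n_2(e)$, rather than from the recursion of Theorem~\ref{theorem1.1}, since the layered structure of a generalized Bethe tree lets one evaluate the edge weights level by level. The first step is to observe that every edge of $B_{k+1}$ joins a vertex at some level $i$ to one of its children at level $i+1$, so the edge set partitions as $E(B_{k+1})=\bigcup_{i=1}^{k}E_i$, where $E_i$ collects the edges running between level $i$ and level $i+1$. It then suffices to compute $\sum_{e\in E_i}n_1(e)n_2(e)$ for each $i$ and add the results.

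Next I would pin down the two component sizes for a fixed $e\in E_i$. Deleting $e$ separates the subtree hanging below the level-$(i+1)$ endpoint from the rest of the tree. Because vertices at a common level of a generalized Bethe tree have equal degrees, every such hanging subtree is isomorphic and contains the same number of vertices, namely the quantity called $m_i$ (the descendants, together with the root of that subtree, lying on the lower side of $e$). Hence for every $e\in E_i$ we have $\{n_1(e),n_2(e)\}=\{m_i,\,n-m_i\}$, and the weight $n_1(e)n_2(e)=m_i(n-m_i)$ is constant across $E_i$. This reduces the level-$i$ contribution to $|E_i|\cdot m_i(n-m_i)$, so that $W(B_{k+1})=\sum_{i=1}^{k}|E_i|\,m_i(n-m_i)$.

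The remaining and decisive step is to determine the multiplicity $|E_i|$, i.e. the coefficient multiplying $m_i(n-m_i)$, and to match it against the stated factor $n_{i+1}-1$. Here I would use that each vertex at level $i+1$ is joined to exactly one parent at level $i$, so the edges of $E_i$ are in bijection with the level-$(i+1)$ vertices; I would make this count explicit through the recursions $n_1=1$, $n_2=d_1$, $n_{i+1}=d_i n_i$, and express $m_i$ as a partial product of the $d_j$. Carrying out this bookkeeping is where the proof is most delicate, since an off-by-one in the per-level edge count changes each summand by exactly $m_i(n-m_i)$. The main obstacle is therefore to justify the precise coefficient in the displayed sum: I would cross-check the outcome against the recursive evaluation furnished by Theorem~\ref{theorem1.1} on small instances (for example a star, and the Bethe tree $B_{k,d}$) to confirm which constant the identity actually demands before committing to the final form.
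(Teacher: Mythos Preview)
Your argument is sound, but it proves a different identity than the one stated: the theorem you are asked to establish is in fact \emph{false}, and the paper quotes it precisely in order to refute it. Immediately after stating this result the authors exhibit the counterexample $B_3$ (root of degree $2$, each child of degree $4$, so $k=2$, $n_2=2$, $n_3=6$, $m_1=4$, $m_2=1$, $n=9$); the displayed formula with coefficient $n_{i+1}-1$ then gives $W(B_3)=1\cdot4\cdot5+5\cdot1\cdot8=60$, whereas direct computation yields $W(B_3)=88$. Your own edge-count step already produces the correct coefficient: the bijection you describe between $E_i$ and the vertices at level $i+1$ gives $|E_i|=n_{i+1}$, not $n_{i+1}-1$, and hence
\[
W(B_{k+1})=\sum_{i=1}^{k} n_{i+1}\,m_i(n-m_i),
\]
which is exactly the corrected formula the paper states and proves as its Theorem~\ref{theorem1.3}. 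The cross-check you propose on small instances would have uncovered this: there is no way to close the gap between $|E_i|=n_{i+1}$ and the claimed factor $n_{i+1}-1$, because the claim is wrong. So your method and the paper's method for the \emph{corrected} theorem coincide; the only issue is that the target statement cannot be proved.
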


\setlength{\unitlength}{1mm}
\begin{picture}(120, 40)
\centering
\multiput(60,30)(0,0){1}{\circle*{1}}
\multiput(40,20)(40,0){2}{\circle*{1}}
\multiput(30,10)(10,0){3}{\circle*{1}}
\multiput(70,10)(10,0){3}{\circle*{1}}
  \thicklines
\put(60,30){\line(-2,-1){20}}
\put(60,30){\line(2,-1){20}}
\put(40,20){\line(-1,-1){10}}
\put(40,20){\line(0,-1){10}}
\put(40,20){\line(1,-1){10}}
\put(80,20){\line(-1,-1){10}}
\put(80,20){\line(0,-1){10}}
\put(80,20){\line(1,-1){10}}
\put(50,0){\small{Figure 1  ~~$B_3$ }}
\end{picture}

Unfortunately, this result is not correct. For example[see figure
1]: $B_3$ is a generalized Bethe tree with 9 vertices. It is easy to
see that $k=2,n_2=2,n_3=6,m_1=4,m_2=1$. Using the formula as above,
we have $W(B_3)=60$. But actually, the Wiener index of $W(B_3)=88$.
In here, we present a correct formula for the Wiener index of a
generalized Bethe tree.
\begin{theorem}
\label{theorem1.3}
  Let $B_{k+1}$ be a generalized Bethe tree of $k+1$ levels. If $d_1$ denotes the degree of rooted vertex
  and $d_i+1$ denotes the degree of vertices on $i$th level of  $B_{k+1}$ for $1<i\leq k$,
  then
  \begin{eqnarray}\label{eqn3}
 W(B_{k+1})=\sum\limits_{i=1}^k n_{i+1}m_i(n-m_i),
\end{eqnarray}
where $n_{i+1}=d_1d_2\cdots d_i$ and
  $m_i=1+\sum\limits_{j=i+1}^k\prod_{r=i+1}^id_r$ for $1\leq i\leq k$.
\end{theorem}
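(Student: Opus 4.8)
The plan is to apply Wiener's formula (\ref{equ1}) directly, rather than to unwind the recursion of Theorem~\ref{theorem1.1}. Write $n=1+n_2+\cdots+n_{k+1}$ for the order of $B_{k+1}$, and recall that by (\ref{equ1}) one has $W(B_{k+1})=\sum_{e\in E(B_{k+1})}n_1(e)n_2(e)$, where $n_1(e)$ and $n_2(e)$ are the orders of the two components of $B_{k+1}-e$. Since $B_{k+1}$ is rooted with levels $1,\dots,k+1$, every edge joins a vertex at some level $i$ to one of its children at level $i+1$; let $E_i$ denote the set of these edges, so that $E(B_{k+1})=E_1\cup\cdots\cup E_k$ is a partition of the edge set, and it suffices to evaluate the contribution of each $E_i$.

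First I would count $|E_i|$. Each vertex at level $i+1$ has a unique parent, so $|E_i|$ equals the number of vertices at level $i+1$, which is $n_{i+1}=d_1d_2\cdots d_i$ by the definition of a generalized Bethe tree. Next, fix $e\in E_i$, joining a vertex $u$ at level $i$ to a child $v$ at level $i+1$. Deleting $e$ leaves one component $C_v$ consisting of $v$ together with all of its descendants. Using that a vertex at level $j$ with $i+1\le j\le k$ has exactly $d_j$ children while the vertices at level $k+1$ are pendent, a level-by-level count gives
\begin{eqnarray*}
|V(C_v)|=1+d_{i+1}+d_{i+1}d_{i+2}+\cdots+d_{i+1}d_{i+2}\cdots d_k=1+\sum_{j=i+1}^{k}\prod_{r=i+1}^{j}d_r=m_i,
\end{eqnarray*}
with the convention that the sum is empty (so $m_k=1$) when $i=k$. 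Crucially, because all vertices at a given level of a generalized Bethe tree have equal degree, the subtree hanging below $v$ is isomorphic for every edge in $E_i$, so $|V(C_v)|=m_i$ holds uniformly over $e\in E_i$; the other component then has $n-m_i$ vertices.

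Assembling these pieces, $n_1(e)n_2(e)=m_i(n-m_i)$ for every $e\in E_i$, and hence
\begin{eqnarray*}
W(B_{k+1})=\sum_{i=1}^{k}\ \sum_{e\in E_i}m_i(n-m_i)=\sum_{i=1}^{k}n_{i+1}\,m_i(n-m_i),
\end{eqnarray*}
which is (\ref{eqn3}). The only point that needs care is the bookkeeping for $m_i$ — getting the product/sum indices and the empty-product boundary case at $i=k$ right, and justifying that $m_i$ is independent of the chosen edge of $E_i$, which is precisely where the defining homogeneity of generalized Bethe trees is used. (One could instead prove (\ref{eqn3}) by induction on $k$, viewing $B_{k+1}$ as a new root joined to $d_1$ disjoint copies of a $k$-level generalized Bethe tree and invoking Theorem~\ref{theorem1.1}, but the resulting telescoping is messier than the direct edge count above, and it is exactly this kind of recursion that seems to have produced the erroneous formula in Theorem~\ref{theorem1.2}.)
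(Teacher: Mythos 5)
Your proposal is correct and follows essentially the same route as the paper: both partition the edges by level, observe that the number of level-$i$-to-level-$(i+1)$ edges is $n_{i+1}$ and that the component below any such edge has exactly $m_i$ vertices, and then apply Wiener's formula (\ref{equ1}). Your write-up is slightly more careful about the boundary case $m_k=1$ and about the index typo in the stated $m_i$ (the inner product should run to $j$, not $i$), but the argument is the same.
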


\begin{proof} Let $n_i$ be the number of vertices on the $i$th level of $B_{k+1}$.
 Thus $n_1=1$
and $n_i=d_1d_2d_3\cdots d_{i-1}$ for $i=2,3,\cdots,k+1$. Denote by
$|V(B_{k+1})|=n$. Then
$$n=n_1+n_2+\cdot\cdot\cdot+n_{k+1}=1+\sum\limits_{i=1}^k\prod_{j=1}^id_j.$$
Suppose that $u$ on the $i$th level of $B_{k+1}$ for $1\leq i\leq k$
is the parent of $v$. So all of the children of vertex $v$ are lying
one side of edge $e=uv$. Denote by $m_i$ the number of those
vertices of the tree. Then
$$m_i=1+d_{i+1}+d_{i+1}d_{i+2}+
 \cdot\cdot\cdot +d_{i+1}d_{i+2}\cdots d_k=1+\sum\limits_{j=i+1}^k\prod_{r=i+1}^id_r$$
for $1\leq i\leq k$. Obviously, $m_k=1$. Hence the number of
vertices where lying two sides of $e$ are equal to $n_1(e)=m_i$ and
$n_2(e)=n-m_i$, respectively. Since the number of edges of $B_{k+1}$
where adjacent a vertex on the $i$th level to another vertex on
$(i+1)$th level of $B_{k+1}$ is equal to $n_{i+1}$. By using (1), we
have
 $$W(B_{k+1})=\sum\limits_{e=E(B_{k+1})}n_1(e)n_2(e)=\sum\limits_{i=1}^k n_{i+1}m_i(n-m_i) $$
The proof is completed.
\end{proof}

  By using the correct formula (\ref{eqn3}), it is easy to check that $W(B_3)=88$.
  Obviously, the dendrimer tree $T_{k,d}$ is one of the special generalized Bethe trees.

\begin{corollary}
\label{corollary1.1}
  Let $T_{k,d}$  be a dendrimer tree of $k+1$ levels where degree of the nonpendent vertices is equal to $d$. Then
  the Wiener index of $T_{k,d}$  is computed as follows:
  \begin{equation}\label{equ4}
W(T_{k,d})=\frac{d}{(d-2)^3}[(d-1)^{2k}(kd^2-2(k+1)d+1)+2d(d-1)^k-1].
\end{equation}
\end{corollary}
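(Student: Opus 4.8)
The plan is to specialize Theorem~\ref{theorem1.3} to $T_{k,d}$ and then evaluate the resulting finite sum. Regarded as a generalized Bethe tree $B_{k+1}$, the dendrimer tree $T_{k,d}$ has root of degree $d_1=d$ and every vertex on level $i$ with $2\le i\le k$ of degree $d$, i.e. $d_i=d-1$ for all $2\le i\le k$; I assume $d\ge 3$, so that $d-2\ne 0$ and the geometric-series identity $1+x+\cdots+x^{m}=\frac{x^{m+1}-1}{x-1}$ may be applied with $x=d-1$. Substituting these parameters into Theorem~\ref{theorem1.3}, and using that $d_r=d-1$ for every $2\le r\le k$ to turn the products of degrees into powers, gives for $1\le i\le k$
$$n_{i+1}=d_1d_2\cdots d_i=d(d-1)^{i-1},\qquad m_i=1+\sum_{j=i+1}^{k}(d-1)^{j-i}=\frac{(d-1)^{k-i+1}-1}{d-2};$$
summing the level sizes then yields $n=1+\sum_{i=1}^{k}d(d-1)^{i-1}=\frac{d(d-1)^{k}-2}{d-2}$, and hence $n-m_i=\frac{d(d-1)^{k}-(d-1)^{k-i+1}-1}{d-2}$.

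Next I would plug these into $W(T_{k,d})=\sum_{i=1}^{k}n_{i+1}m_i(n-m_i)$. To keep the exponents manageable, set $a=d-1$ and reindex by $t=k-i+1$, so $t$ runs from $1$ to $k$; the general term becomes
$$n_{i+1}m_i(n-m_i)=\frac{(a+1)\,a^{k-t}(a^{t}-1)\big((a+1)a^{k}-a^{t}-1\big)}{(a-1)^{2}}.$$
Pulling the constant factor $\frac{a+1}{(a-1)^{2}}$ out of the sum and multiplying out the bracket, each summand simplifies to $(a+1)a^{2k}-a^{k+t}-(a+1)a^{2k-t}+a^{k-t}$, so that
$$W(T_{k,d})=\frac{a+1}{(a-1)^{2}}\sum_{t=1}^{k}\Big[(a+1)a^{2k}-a^{k+t}-(a+1)a^{2k-t}+a^{k-t}\Big].$$
Three of these four terms are ordinary geometric sums in $a$; evaluating them and putting everything over the common denominator $a-1$ collapses the right-hand side to
$$W(T_{k,d})=\frac{a+1}{(a-1)^{3}}\Big[k(a^{2}-1)a^{2k}+(a^{k}-1)\big(-2a^{k+1}-a^{k}+1\big)\Big].$$

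The final step is to identify this with formula~(\ref{equ4}). Expanding the bracket and grouping by powers of $a$ yields $k(a^{2}-1)a^{2k}+(a^{k}-1)(-2a^{k+1}-a^{k}+1)=a^{2k}(ka^{2}-2a-k-1)+2(a+1)a^{k}-1$; substituting back $a=d-1$ turns $ka^{2}-2a-k-1$ into $kd^{2}-2(k+1)d+1$ and the prefactor $\frac{a+1}{(a-1)^{3}}$ into $\frac{d}{(d-2)^{3}}$, which is exactly~(\ref{equ4}). As a sanity check, $k=1$ gives $W(T_{1,d})=W(K_{1,d})=d^{2}$ and the pair $k=2,\,d=3$ gives $W=117$, both of which can be verified directly.

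I do not expect a conceptual obstacle here; the whole difficulty is bookkeeping — evaluating the handful of geometric sums without losing a sign and matching the resulting polynomial against~(\ref{equ4}). This is precisely why I would introduce $a=d-1$ and the shift $t=k-i+1$ at the outset: together they reduce every intermediate summation to a textbook geometric series and make the closing comparison with~(\ref{equ4}) essentially mechanical.
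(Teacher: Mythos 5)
Your proposal is correct and follows essentially the same route as the paper: both specialize formula~(\ref{eqn3}) to $T_{k,d}$ via $n_{i+1}=d(d-1)^{i-1}$, $m_i=\frac{(d-1)^{k-i+1}-1}{d-2}$, $n-m_i=\frac{d(d-1)^k-(d-1)^{k-i+1}-1}{d-2}$, and then evaluate the resulting geometric sums. Your substitution $a=d-1$ and the reindexing $t=k-i+1$ are purely cosmetic bookkeeping devices; the algebra and the final identification with~(\ref{equ4}) agree with the paper's computation.
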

\begin{proof} Since  the degree of nonpendent vertices of $T_{k,d}$ is equal to $d$,
we have $n_1=1$, $n_i=d(d-1)^{i-2}$ for $2\leq i\leq k+1$,
$n=1+\frac{d((d-1)^k-1)}{d-2}$ and
$m_i=1+\frac{(d-1)((d-1)^{k-i}-1)}{d-2}.$
 By (\ref{eqn3}), we have
\begin{eqnarray*}
W(T_{k,d})&=&\sum\limits_{i=1}^kd(d-1)^{i-1}[1+\frac{(d-1)[(d-1)^{k-i}-1]}{d-2}][\frac{d((d-1)^k-1)}{d-2}\\
&&-\frac{(d-1)[(d-1)^{k-i}-1]}{d-2}]\\
&=&\sum\limits_{i=1}^kd(d-1)^{i-1}\frac{[d-2+(d-1)[(d-1)^{k-i}-1]}{d-2}\times\\
&&\frac{[d(d-1)^k-(d-1)^{k-i+1}-1]}{d-2}\\
&=&\sum\limits_{i=1}^kd(d-1)^{i-1}\frac{[(d-1)^{k-i+1}-1]}{d-2}\times\frac{[d(d-1)^k-(d-1)^{k-i+1}-1]}{d-2}\\
&=&\frac{d}{(d-2)^2}\sum\limits_{i=1}^k[(d-1)^k-(d-1)^{i-1}][d(d-1)^k-(d-1)^{k-i+1}-1]\\
&=&\frac{d}{(d-2)^2}\sum\limits_{i=1}^k[d(d-1)^{2k}-(d-1)^{2k-i+1}-d(d-1)^{k+i-1}+(d-1)^{i-1}]\\
&=&\frac{d}{(d-2)^2}[kd(d-1)^{2k}-\sum\limits_{i=1}^k
(d-1)^{2k-i+1}-d\sum\limits_{i=1}^k (d-1)^{k+i-1}+\sum\limits_{i=1}^k (d-1)^{i-1}]\\
&=&\frac{d}{(d-2)^2}[kd(d-1)^{2k}-\frac{(d-1)^{k+1}[(d-1)^k-1]}{d-2}-\\
&&\frac{d(d-1)^{k}[(d-1)^k-1]}{d-2}+\frac{(d-1)^k-1}{d-2}]\\
&=&\frac{d}{(d-2)^3}[(d-1)^{2k}(kd^2-2(k+1)d+1)+2d(d-1)^k-1]
\end{eqnarray*}
The proof is completed.
\end{proof}

\begin{corollary}
\label{corollary1.2}
 The Wiener index of  a Bethe tree $B_{k,d}$  is computed as follows:
    $$W(B_{k,d})=\frac{d^k}{(d-1)^3}[(k-1)(d-1)(d^k+1)-2d(d^{k-1}-1)]$$
\end{corollary}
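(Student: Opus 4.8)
The plan is to recognize that a Bethe tree $B_{k,d}$ is a special generalized Bethe tree of $k$ levels, so Theorem \ref{theorem1.3} applies directly once the parameters are identified. In the notation of Theorem \ref{theorem1.3}, a Bethe tree $B_{k,d}$ has $k$ levels (so the index $k$ of that theorem is replaced by $k-1$), with the root of degree $d$, every internal vertex at level $j$ for $2\le j\le k-1$ of degree $d+1$, and the level-$k$ vertices pendent. In the recursive parameters $d_i$ of Theorem \ref{theorem1.3}, this means $d_1=d$ and $d_i=d$ for all relevant $i$, since a vertex of degree $d+1$ has exactly $d$ children. Hence the number of vertices on level $i$ is $n_i=d^{i-1}$ for $1\le i\le k$, the total order is $n=\sum_{i=0}^{k-1}d^i=\frac{d^k-1}{d-1}$, and the branch count is $m_i=\sum_{j=0}^{k-1-i}d^j=\frac{d^{k-i}-1}{d-1}$ for $1\le i\le k-1$.

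Next I would substitute these closed forms into formula (\ref{eqn3}):
$$W(B_{k,d})=\sum_{i=1}^{k-1}n_{i+1}\,m_i\,(n-m_i)=\sum_{i=1}^{k-1}d^{i}\cdot\frac{d^{k-i}-1}{d-1}\cdot\left(\frac{d^k-1}{d-1}-\frac{d^{k-i}-1}{d-1}\right).$$
Simplifying the last factor gives $\frac{d^k-d^{k-i}}{d-1}=\frac{d^{k-i}(d^i-1)}{d-1}$, so the summand becomes $\frac{d^i(d^{k-i}-1)d^{k-i}(d^i-1)}{(d-1)^2}=\frac{d^k(d^{k-i}-1)(d^i-1)}{(d-1)^2}$. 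Factoring out $\frac{d^k}{(d-1)^2}$ leaves $\sum_{i=1}^{k-1}(d^{k-i}-1)(d^i-1)=\sum_{i=1}^{k-1}\bigl(d^k-d^{k-i}-d^i+1\bigr)$, which splits into $(k-1)d^k+(k-1)-2\sum_{i=1}^{k-1}d^i$ after reindexing one of the geometric sums. Then I would evaluate $\sum_{i=1}^{k-1}d^i=\frac{d(d^{k-1}-1)}{d-1}=\frac{d^k-d}{d-1}$ and combine everything over the common denominator $(d-1)^3$.

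The remaining work is purely algebraic: collecting $(k-1)(d^k+1)$ times $(d-1)$ and subtracting $2(d^k-d)=2d(d^{k-1}-1)$, then multiplying back by $\frac{d^k}{(d-1)^2}$ to obtain
$$W(B_{k,d})=\frac{d^k}{(d-1)^3}\bigl[(k-1)(d-1)(d^k+1)-2d(d^{k-1}-1)\bigr],$$
which is the claimed identity. The only genuine care point — not really an obstacle — is the bookkeeping of the level index: Theorem \ref{theorem1.3} is stated for a tree of $k+1$ levels, while $B_{k,d}$ has $k$ levels, so one must consistently run the summation index from $1$ to $k-1$ and set $n_{i+1}=d^i$, $m_i=\frac{d^{k-i}-1}{d-1}$ accordingly; a second minor point is handling the degenerate case $d=1$ (a path), where the formula should be read as a limit, but since $B_{k,d}$ is only interesting for $d\ge 2$ this needs no separate treatment. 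Everything else is a direct specialization of the already-proved Theorem \ref{theorem1.3} together with standard geometric-series summation.
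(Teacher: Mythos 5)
Your proposal is correct and follows essentially the same route as the paper's own proof: both specialize formula (\ref{eqn3}) with $d_i=d$, $n_{i+1}=d^i$, $m_i=\frac{d^{k-i}-1}{d-1}$, $n=\frac{d^k-1}{d-1}$, sum over $i=1,\dots,k-1$, and reduce the resulting geometric sums to the stated closed form. The only cosmetic difference is that you factor the summand as $\frac{d^k(d^{k-i}-1)(d^i-1)}{(d-1)^2}$ before expanding, whereas the paper expands directly; the algebra is identical.
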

\begin{proof} Since degree of the nonpendent vertices of $B_{k,d}$ is equal to $d+1$ except the rooted vertex whose degree is $d$, we have
$n_1=1$, $n_{i+1}=d^{i}$ for $1\leq i\leq{k-1}$,
$n=\frac{d^k-1}{d-1}$ and $m_i=\frac{d^{k-i}-1}{d-1}$. By
(\ref{eqn3}), we can get
\begin{eqnarray*}
W(B_{k,d})&=&\sum\limits_{i=1}^{k-1}d^{i}\frac{d^{k-i}-1}{d-1}(\frac{d^k-1}{d-1}-\frac{d^{k-i}-1}{d-1})\\
&=&\frac{d^k}{(d-1)^2}\sum\limits_{i=1}^{k-1}(d^k-d^{k-i}-d^i+1)\\
&=&\frac{d^k}{(d-1)^2}[(k-1)(d^k+1)-\sum\limits_{i=1}^{k-1}d^{k-i}-\sum\limits_{i=1}^{k-1}d^i
] \\
&=&\frac{d^k}{(d-1)^2}[(k-1)(d^k+1)-\frac{2 d(d^{k-1}-1)}{d-1}]\\
&=&\frac{d^k}{(d-1)^3}[(k-1)(d-1)(d^k+1)-2d(d^{k-1}-1)].
\end{eqnarray*}
The proof is completed.
\end{proof}

\section{\large\bf{Terminal Wiener index of trees}}

 In this section, we consider the terminal Wiener index of trees.
 For a tree $T$ with order $n\geq3$ with rooted $w$,
 let $T_1,T_2,\cdots,T_m(m\geq2)$ be components of $T-w$
with orders $n_1,n_2,\cdots,$ $n_m$, respectively, where $w_i$ is
adjacent to the vertex $w$ in $T$ and is the rooted vertex in $T_i$.
 Let $l$ be the number of pendent vertices in $T$ and $l_i$ $(1\leq
i \leq m)$ be the number of pendent vertices in $T_i$. Clearly,
$l_1+l_2+\cdots+l_m=l$. We present a formula for computing the
terminal Wiener index of a tree by the terminal Wiener index of
subtrees.

\begin{theorem}
\label{theorem3.1}
  Let $T$ be a  tree with order $n\geq3$, whose structure is described as above. Then
   \begin{equation}\label{th31}
   TW(T)=\sum\limits_{i=1}^m[TW(T_i)+(l-l_i)d'_{T_i}(w_i)-l_i^2]+l^2,
   \end{equation}
  where $d'_{T_i}(w_i)$ is the sum of distances between $w_i$ and
   all other pendent vertices of $T_i$ for $1\leq i\leq m$.
\end{theorem}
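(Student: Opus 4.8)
The plan is to mimic the classical recursive decomposition for the ordinary Wiener index (Theorem~\ref{theorem1.1}), but now bookkeeping only the pendent vertices. Write $P(T)$ for the set of pendent vertices of $T$ and $l=|P(T)|$. I would split the sum $TW(T)=\sum_{\{u,v\}\subseteq P(T)}d_T(u,v)$ into two parts: pairs $u,v$ lying in the same subtree $T_i$, and pairs lying in different subtrees $T_i,T_j$ with $i\neq j$. One subtlety to address at the outset: since $n\geq 3$ and $w$ has degree $m\geq 2$ in $T$, the root $w$ is never a pendent vertex of $T$, and a vertex $w_i$ is a pendent vertex of $T$ precisely when $T_i$ is a single vertex (so $n_i=1$, $l_i=1$, and then $w_i\in P(T)$). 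It is convenient to observe that in all cases $P(T)=\bigcup_i P'(T_i)$ where $P'(T_i)$ is the set of pendent vertices of $T_i$ \emph{as a rooted subtree}, with the convention that a one-vertex $T_i$ has its root counted as a pendent vertex; this is exactly the set counted by $l_i$, and $d'_{T_i}(w_i)$ sums distances from $w_i$ to the other members of $P'(T_i)$.

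For the same-subtree pairs, distances in $T$ between two vertices of $T_i$ equal distances in $T_i$, so these contribute $\sum_{i=1}^m TW(T_i)$ (interpreting $TW$ of a one- or two-vertex rooted tree as $0$). For the cross pairs, if $u\in P'(T_i)$ and $v\in P'(T_j)$ with $i\neq j$, then the $u$–$v$ path in $T$ goes $u\leadsto w_i\to w\to w_j\leadsto v$, so $d_T(u,v)=d_{T_i}(u,w_i)+d_{T_j}(v,w_j)+2$. Summing over all such ordered-then-unordered pairs, the ``$+2$'' contributes $2\binom{l}{2}-2\sum_i\binom{l_i}{2}=\big(l^2-l\big)-\sum_i (l_i^2-l_i)$. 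The distance-to-root terms contribute $\sum_{i\neq j} l_j\, d'_{T_i}(w_i)=\sum_i (l-l_i)\,d'_{T_i}(w_i)$. Adding the three pieces: $\sum_i TW(T_i)+\sum_i (l-l_i)d'_{T_i}(w_i)+(l^2-l)-\sum_i(l_i^2-l_i)$. Now $\sum_i l_i=l$, so $-(l)+\sum_i l_i=0$, and the linear terms cancel, leaving $\sum_{i=1}^m\big[TW(T_i)+(l-l_i)d'_{T_i}(w_i)-l_i^2\big]+l^2$, which is~\eqref{th31}.

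The only genuinely delicate point — and the one I would be most careful about — is the degenerate cases where some $T_i$ is a single vertex or a single edge, so that $w_i$ is itself pendent in $T_i$ (or even in $T$). I would check that with the stated convention ($w_i$ counted among the $l_i$ pendent vertices of $T_i$ when $n_i=1$, and $TW(T_i)=0$, $d'_{T_i}(w_i)=0$ in the trivial cases) every term in the derivation above is still correct: the same-subtree contribution of such a $T_i$ is genuinely $0=TW(T_i)$, and the cross-term handling is unaffected because $u=w_i$ is allowed as an endpoint of a cross path. Once this convention is pinned down, the computation is the routine splitting described above and no further obstacle arises.
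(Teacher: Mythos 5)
Your proof is correct and takes essentially the same route as the paper's: both split $TW(T)$ into within-subtree pairs and cross-subtree pairs and use $d_T(u,v)=d_{T_i}(u,w_i)+d_{T_j}(v,w_j)+2$ for the latter, the paper summing the resulting identity $\sum_{h,k}d(x_{ih},x_{jk})=l_jd'_{T_i}(w_i)+l_id'_{T_j}(w_j)+2l_il_j$ over pairs $i<j$ exactly as you do in aggregate. Your explicit handling of the degenerate one-vertex subtrees and of the convention for counting $w_i$ among the pendent vertices is a point the paper leaves implicit, but it does not change the argument.
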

 \begin{proof} Let $x_{ij}(1\leq j\leq l_i)$ be the pendent vertex in $T_i(1\leq i\leq m)$. Then

 \begin{eqnarray*}
TW(T)&=&\sum\limits_{i=1}^mTW(T_i)+\sum_{k=1}^{l_2}\sum_{h=1}^{l_1}d(x_{1h},x_{2k})+\sum_{k=1}^{l_3}\sum_{h=1}^{l_1}d(x_{1h},x_{3k})+\cdots+\\
&&\sum_{k=1}^{l_m}\sum_{h=1}^{l_1}d(x_{1h},x_{mk})+\sum_{k=1}^{l_3}\sum_{h=1}^{l_2}d(x_{2h},x_{3k})+\cdots+\sum_{k=1}^{l_m}\sum_{h=1}^{l_2}d(x_{2h},x_{mk})\\
&&+\cdots+\sum_{k=1}^{l_m}\sum_{h=1}^{l_{m-1}}d(x_{(m-1)h},x_{mk})\\
&=&\sum\limits_{i=1}^mTW(T_i)+\sum_{i=2}^m\sum_{k=1}^{l_i}\sum_{h=1}^{l_1}d(x_{1h},x_{ik})+\sum_{i=3}^m\sum_{k=1}^{l_i}\sum_{h=1}^{l_2}d(x_{2h},x_{ik})\\
&&+\cdots+\sum_{i=m-1}^m\sum_{k=1}^{l_i}\sum_{h=1}^{l_{m-2}}d(x_{(m-2)h},x_{ik})+\sum_{k=1}^{l_m}\sum_{h=1}^{l_{m-1}}d(x_{(m-1)h},x_{mk})
\end{eqnarray*}
Since the sum of distances between pendent vertices in each $T_i$ and $T_j$ can be calculated, i.e
\begin{eqnarray*}
\sum_{k=1}^{l_j}\sum_{h=1}^{l_i}d(x_{ih},x_{jk})=l_j d'_{T_i}(w_i)+l_i d'_{T_j}(w_j)+2l_il_j
\end{eqnarray*}
and $l^2=(l_1+l_2+\cdots+l_m)^2=\sum\limits_{i=1}^ml_i^2+2\sum\limits_{1\leq i<j\leq m}l_il_j$, then we have

\begin{eqnarray*}
TW(T)&=&\sum\limits_{i=1}^mTW(T_i)+(l_2+l_3+\cdots+l_m)d'_{T_1}(w_1)+(l_1+l_3+\cdots+l_m)d'_{T_2}(w_2)\\
&&+\cdots+(l_1+l_2+\cdots+l_{m-1})d'_{T_m}(w_m)+2\sum\limits_{1\leq i<j\leq m}l_il_j\\
&=&\sum\limits_{i=1}^mTW(T_i)+(l-l_1)d'_{T_1}(w_1)+(l-l_2)d'_{T_2}(w_2)+\cdots\\
&&+(l-l_m)d'_{T_m}(w_m)+l^2-\sum\limits_{i=1}^ml_i^2\\
&=&\sum\limits_{i=1}^m[TW(T_i)+(l-l_i)d'_{T_i}(w_i)-l_i^2]+l^2.
\end{eqnarray*}
We finish the proof.\end{proof}

  With (\ref{th31}),  the formulae for the terminal Wiener index of generalized Bethe
  trees, Bethe trees $B_{k,d}$ and $T_{k,d}$ are obtained, which
  correct the errors of  \cite{heydari2013}.

\begin{theorem}
\label{theorem3.2}
  Let $B_{k+1}$ be a generalized Bethe tree of $k+1$ levels. Then
  \begin{eqnarray}\label{eqn6}
 TW(B_{k+1})=\prod\limits_{i=1}^kd_i
 \times(k\prod\limits_{i=1}^kd_i-1-\sum\limits_{i=1}^{k-1}\prod\limits_{j=1}^i
 d_{k-j+1}).
\end{eqnarray}
\end{theorem}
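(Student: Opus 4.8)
The plan is to bypass the recursion of Theorem~\ref{theorem3.1} entirely and apply Gutman's edge formula (\ref{eqn2}) directly, exploiting the fact that in a generalized Bethe tree every quantity attached to an edge depends only on the two levels that edge joins. First I would fix notation. Let $n_i$ be the number of vertices on level $i$, so $n_1=1$ and $n_{i+1}=\prod_{j=1}^{i}d_j$ for $1\le i\le k$; in particular the number of pendent vertices is $l=n_{k+1}=\prod_{j=1}^{k}d_j=:P$. Because a vertex on level $j$ with $j\le k$ has exactly $d_j$ children (the root on level $1$ has $d_1$ of them, and a vertex on level $j$ with $1<j\le k$ has degree $d_j+1$, hence $d_j$ children), the subtree rooted at any vertex on level $i+1$ contains exactly $q_i:=\prod_{j=i+1}^{k}d_j$ pendent vertices, with the convention $q_k=1$.

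Next I would analyze a single edge $e=uv$ joining a level-$i$ vertex $u$ to one of its children $v$ on level $i+1$, for $1\le i\le k$. Deleting $e$ separates $B_{k+1}$ into the subtree rooted at $v$, which carries $q_i$ pendent vertices, and its complement, which carries $P-q_i$; hence $p_1(e)p_2(e)=q_i(P-q_i)$, depending only on $i$. Since every edge of $B_{k+1}$ joins two consecutive levels and there are $n_{i+1}$ edges between levels $i$ and $i+1$, formula (\ref{eqn2}) gives
\[
TW(B_{k+1})=\sum_{i=1}^{k}n_{i+1}\,q_i\,(P-q_i).
\]
The sum collapses because $n_{i+1}q_i=\bigl(\prod_{j=1}^{i}d_j\bigr)\bigl(\prod_{j=i+1}^{k}d_j\bigr)=P$ for every $i$, so that $TW(B_{k+1})=\sum_{i=1}^{k}P(P-q_i)=kP^2-P\sum_{i=1}^{k}q_i$.

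Finally I would massage $\sum_{i=1}^{k}q_i$ into the shape of (\ref{eqn6}). Peeling off the term $q_k=1$ and reversing the summation index via $i\mapsto k-i$ in the remainder turns $\sum_{i=1}^{k-1}\prod_{j=i+1}^{k}d_j$ into $\sum_{i=1}^{k-1}\prod_{j=k-i+1}^{k}d_j=\sum_{i=1}^{k-1}\prod_{j=1}^{i}d_{k-j+1}$, so $\sum_{i=1}^{k}q_i=1+\sum_{i=1}^{k-1}\prod_{j=1}^{i}d_{k-j+1}$. Substituting this back and factoring out $P=\prod_{i=1}^{k}d_i$ yields
\[
TW(B_{k+1})=P\Bigl(kP-1-\sum_{i=1}^{k-1}\prod_{j=1}^{i}d_{k-j+1}\Bigr),
\]
which is precisely (\ref{eqn6}).

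There is no genuine obstacle here; the argument is pure bookkeeping. The only delicate point is the index juggling at the end: recognizing that $n_{i+1}$ (the number of level-$(i+1)$ vertices) and $q_i$ (the number of pendent descendants of a single level-$(i+1)$ vertex) are complementary products whose product is $P$, and that the reversal $i\mapsto k-i$ is exactly what converts the natural edge-index of the sum into the reversed-product index $d_{k-j+1}$ used in the statement. As a check, specializing the $d_i$ to the degree sequence of a Bethe tree or a regular dendrimer should recover the explicit formulas the paper records immediately afterwards; and, if preferred, the same result also follows by induction on $k$ from Theorem~\ref{theorem3.1} after computing $d'_{T_i}(w_i)$ for the branch subtrees, but the direct edge count is shorter.
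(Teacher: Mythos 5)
Your proposal is correct and follows essentially the same route as the paper: both apply Gutman's edge formula (\ref{eqn2}) directly, count $n_{i+1}$ edges between levels $i$ and $i+1$, and observe that each such edge separates $q_i=\prod_{j=i+1}^k d_j$ pendent vertices from the remaining $P-q_i$ (your $q_i$ is the paper's $m'_i$). Your observation that $n_{i+1}q_i=P$ collapses the sum a bit more cleanly than the paper's term-by-term expansion, but the argument is the same.
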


\begin{proof} The pendent vertices of the generalized Bethe tree $B_{k+1}$ are located on the final level of the tree.
Let $n'$ be the number of pendent vertices of $B_{k+1}$, then
$n'=d_1d_2\cdots d_k$. Suppose that $e=uv$ is an edge of $B_{k+1}$,
and $u$ is the parent of $v$ on the $i$th level of $B_{k+1}$ for
$1\leq i\leq k$. Let $m'_i$ and $m''_i$ be the number of pendent
vertices of $B_{k+1}$, lying on the two sides of $e$ ,then
$m'_i=d_{i+1}d_{i+2}\cdots d_k$ and $m''_i=n'-d_{i+1}d_{i+2}\cdots
d_k$ for $1\leq i\leq k-1$. Obviously, $m'_k=1$ and $m''_k=n'-1$.
Since we have mentioned in Theorem~\ref{theorem1.2} that $n_{i+1}$
which stands for the number of edges where adjacent a vertex on the
$i$th level to another vertex on the $(i+1)$th level of $B_{k+1}$ is
equal to $d_1d_2\cdots d_i$ for $1\leq i\leq k+1$,  by using
(\ref{eqn2}), we have

\begin{eqnarray*}
TW(B_{k+1})&=&\sum\limits_{e\in E(B_{k+1})}p_1(e)p_2(e)\\
&=&\sum\limits_{i=1}^{k-1}n_{i+1}m'_i m''_i+n'm'_k m''_k\\
&=&\sum\limits_{i=1}^{k-1}n_{i+1}d_{i+1}d_{i+2}\cdots d_k(n'-d_{i+1}d_{i+2}\cdots d_k)+n'(n'-1)\\
&=&\sum\limits_{i=1}^{k-1}d_1d_2\cdots d_id_{i+1}d_{i+2}\cdots d_k(d_1d_2\cdots d_k-d_{i+1}d_{i+2}\cdots d_k)+\\
&&d_1d_2\cdots d_k(d_1d_2\cdots d_k-1)\\
&=&(k-1)(\prod\limits_{i=1}^{k}d_i)^{2}-\prod\limits_{i=1}^{k}d_i \sum\limits_{i=1}^{k-1}d_{i+1}d_{i+2}\cdots d_k
+\prod\limits_{i=1}^{k}d_i(\prod\limits_{i=1}^{k}d_i-1)\\
&=&\prod\limits_{i=1}^{k}d_i\times[(k-1)(\prod\limits_{i=1}^{k}d_i)+\prod\limits_{i=1}^{k}d_i-1-\sum\limits_{i=1}^{k-1}d_{i+1}d_{i+2}\cdots d_k]\\
&=&\prod\limits_{i=1}^kd_i \times(k\prod\limits_{i=1}^kd_i-1-\sum\limits_{i=1}^{k-1}\prod\limits_{j=1}^i d_{k-j+1})
\end{eqnarray*}
The proof is completed.
\end{proof}

  From Theorem~\ref{theorem3.2}, we can get the terminal Wiener index of $T_{k,d}$ .
\begin{corollary}
\label{corollary1.1}
  Let $T_{k,d}$  be a dendrimer tree of $k+1$ levels where degree of the nonpendent vertices is equal to $d$. Then
  the terminal Wiener index of $T_{k,d}$  is computed as follows:
    $$TW(T_{k,d})=d(d-1)^{k-1}[kd(d-1)^{k-1}+\frac{1-(d-1)^{k}}{d-2}]$$
\end{corollary}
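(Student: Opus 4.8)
The plan is to derive the formula for $TW(T_{k,d})$ as a direct specialization of Theorem~\ref{theorem3.2}, exactly parallel to how Corollary~\ref{corollary1.2} was obtained from Theorem~\ref{theorem1.3}. A regular dendrimer tree $T_{k,d}$ has $k+1$ levels and every nonpendent vertex has degree $d$; in the notation of Theorem~\ref{theorem3.2} this means $d_1 = d$ and $d_2 = d_3 = \cdots = d_k = d-1$, since the root has $d$ children while every interior vertex has one parent and hence $d-1$ children. First I would record these values and substitute them into \eqref{eqn6}.

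Next I would simplify the three pieces of the right-hand side of \eqref{eqn6}. The product $\prod_{i=1}^k d_i = d(d-1)^{k-1}$, which will be the common factor pulled out front. The term $k\prod_{i=1}^k d_i$ becomes $kd(d-1)^{k-1}$. The remaining sum $\sum_{i=1}^{k-1}\prod_{j=1}^{i} d_{k-j+1}$ is a sum of "tail products": for each $i$ from $1$ to $k-1$ it multiplies the $i$ largest-indexed $d$'s, namely $d_k d_{k-1}\cdots d_{k-i+1}$; since all of these indices are $\ge 2$, each such product equals $(d-1)^i$. Hence that sum is $\sum_{i=1}^{k-1}(d-1)^i$, a finite geometric series equal to $\frac{(d-1)\big((d-1)^{k-1}-1\big)}{d-2} = \frac{(d-1)^k - (d-1)}{d-2}$. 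Combining the constant $-1$ with this sum gives $-1 - \frac{(d-1)^k-(d-1)}{d-2}$, and putting things over the common denominator $d-2$ turns the bracketed expression into $kd(d-1)^{k-1} + \frac{(d-2) - (d-1)^k + (d-1)}{d-2} = kd(d-1)^{k-1} + \frac{1-(d-1)^k}{d-2}$, which is exactly the claimed bracket. Multiplying back by $d(d-1)^{k-1}$ yields the stated identity.

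There is no real obstacle here, as the corollary is purely a substitution-and-simplification consequence of an already-proved theorem; the only point requiring a little care is confirming that the lowest index appearing in any tail product $d_{k-j+1}$ with $1\le j\le i\le k-1$ is $d_{k-i+1}$, and that $k-i+1 \ge 2$ whenever $i \le k-1$, so that every factor in every tail product is $d-1$ rather than $d$ — i.e. the special root degree $d_1=d$ never enters these particular sums. Once that index bookkeeping is verified, the geometric-series evaluation and the algebra over the denominator $d-2$ are routine (and one should note in passing that the formula is to be read in the limiting sense when $d=2$, as is standard for such closed forms). I would present the computation as a short displayed chain of equalities inside the \texttt{proof} environment, in the same style as the proof of Corollary~\ref{corollary1.2}.
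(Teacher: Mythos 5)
Your proposal is correct and follows essentially the same route as the paper: substitute $d_1=d$ and $d_2=\cdots=d_k=d-1$ into formula (\ref{eqn6}), evaluate the tail-product sum as the geometric series $\frac{(d-1)\left[(d-1)^{k-1}-1\right]}{d-2}$, and simplify over the denominator $d-2$. The only (immaterial) difference is that you read the sum directly as $\sum_{i=1}^{k-1}(d-1)^{i}$ while the paper reindexes it as $\sum_{i=1}^{k-1}(d-1)^{k-i}$; both give the same value.
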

\begin{proof} Since degree of the nonpendent vertices of $T_{k,d}$ is equal to $d$, it is easy to see that
$d_1$ is equal to $d$ and $n_i$ is equal to $d-1$ for $2\leq i\leq
k$. Then
$$n'=\prod\limits_{i=1}^{k}d_i=d(d-1)^{k-1}$$ and
\begin{eqnarray*}
\sum\limits_{i=1}^{k-1}\prod\limits_{j=1}^i d_{k-j+1}&=&\sum\limits_{i=1}^{k-1}d_{i+1}d_{i+2}\cdots d_k\\
&=&\sum\limits_{i=1}^{k-1}(d-1)^{k-i}\\
&=&\frac{(d-1)[(d-1)^{k-1}-1]}{d-2}.
\end{eqnarray*}
By using (\ref{eqn6}), we have
\begin{eqnarray*}
TW(T_{k,d})&=&d(d-1)^{k-1}[kd(d-1)^{k-1}-1-\frac{(d-1)[(d-1)^{k-1}-1]}{d-2}]\\
&=&d(d-1)^{k-1}[kd(d-1)^{k-1}+\frac{1-(d-1)^{k}}{d-2}].
\end{eqnarray*}
The proof is completed.
\end{proof}

\begin{corollary}
\label{corollary1.1}
  Let $B_{k,d}$  be a Bethe tree of $k$ levels. Then
    $$TW(B_{k,d})=\frac{d^{k-1}}{d-1}[d^{k-1}(kd-k-d)+1].$$
\end{corollary}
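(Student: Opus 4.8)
The plan is to obtain this formula as a direct specialization of Theorem~\ref{theorem3.2}, in the same spirit as the corollary for dendrimer trees: a Bethe tree $B_{k,d}$ is a generalized Bethe tree, so formula (\ref{eqn6}) applies once the parameters $d_i$ are identified. First I would note that $B_{k,d}$ has $k$ levels, so it is the generalized Bethe tree denoted $B_{k+1}$ in Theorem~\ref{theorem3.2} with $k+1$ replaced by $k$ (equivalently, with the summation index ``$k$'' there replaced by ``$k-1$''). Since the root has degree $d$ and every other nonpendent vertex has degree $d+1$, each nonpendent vertex has exactly $d$ children; hence $d_i=d$ for all $1\le i\le k-1$, and the number of pendent vertices is $n'=\prod_{i=1}^{k-1}d_i=d^{k-1}$.

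Next I would substitute into (\ref{eqn6}). With every $d_i=d$ the inner sum collapses to a geometric series,
$$\sum\limits_{i=1}^{k-2}\prod\limits_{j=1}^i d_{k-j}=\sum\limits_{i=1}^{k-2}d^i=\frac{d^{k-1}-d}{d-1},$$
(the empty case $k=2$ being consistent with the right-hand side), so that
$$TW(B_{k,d})=d^{k-1}\left((k-1)d^{k-1}-1-\frac{d^{k-1}-d}{d-1}\right).$$
Then I would put the parenthesis over the common denominator $d-1$ and simplify the numerator:
$$(k-1)d^{k-1}(d-1)-(d-1)-(d^{k-1}-d)=(k-1)d^{k}-kd^{k-1}+1=d^{k-1}(kd-k-d)+1,$$
which yields $TW(B_{k,d})=\frac{d^{k-1}}{d-1}\left[d^{k-1}(kd-k-d)+1\right]$, as claimed.

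Every step is a routine algebraic manipulation, so I do not anticipate a genuine obstacle; the only place to be careful is the index bookkeeping when passing from the $k+1$ levels of Theorem~\ref{theorem3.2} to the $k$ levels of $B_{k,d}$ --- keeping the upper limit $k-2$ of the collapsed sum correct and not conflating the two roles of the letter ``$k$''. As an independent sanity check I would test the smallest case $k=2$, where $B_{2,d}$ is the star $K_{1,d}$: its $d$ pendent vertices are pairwise at distance $2$, so $TW(B_{2,d})=d(d-1)$, and the proposed formula returns $\frac{d}{d-1}\left[d(d-2)+1\right]=\frac{d}{d-1}(d-1)^2=d(d-1)$, in agreement. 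One could equally re-derive the formula from scratch via Gutman's edge formula (\ref{eqn2}), summing $p_1(e)p_2(e)$ level by level, which would give a second confirmation.
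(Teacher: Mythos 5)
Your proposal is correct and follows essentially the same route as the paper: specialize formula (\ref{eqn6}) to the Bethe tree by taking every $d_i=d$ (with the level count shifted from $k+1$ to $k$), collapse the remaining sum into the geometric series $\frac{d(d^{k-2}-1)}{d-1}$, and simplify over the common denominator $d-1$. The only cosmetic difference is that you write the geometric sum as $\sum_{i=1}^{k-2}d^i$ while the paper writes $\sum_{i=1}^{k-2}d^{k-i-1}$, which is the same sum reindexed; your $k=2$ sanity check is a nice extra confirmation.
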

\begin{proof} Since $B_{k,d}$ is a Bethe tree  of level $k$, we replace $k$ in formula (\ref{eqn6})
 by $k-1$.
According to the definition of the Bethe tree $B_{k,d}$, it is easy
to see that  $\prod\limits_{i=1}^{k-1}d_i=d^{k-1}$ and
\begin{eqnarray*}
\sum\limits_{i=1}^{k-2}\prod\limits_{j=1}^i d_{k-j}&=&\sum\limits_{i=1}^{k-2}d_{i+1}d_{i+2}\cdots d_{k-1}\\
&=&\sum\limits_{i=1}^{k-2}d^{k-i-1}\\
&=&\frac{d[d^{k-2}-1]}{d-1}.
\end{eqnarray*}
By using (\ref{eqn6}), we have
\begin{eqnarray*}
TW(B_{k,d})&=&d^{k-1}[(k-1)d^{k-1}-1-\frac{d(d^{k-2}-1)}{d-1}]\\
&=&d^{k-1}[(k-1)d^{k-1}-\frac{d^{k-1}-1}{d-1}]\\
&=&\frac{d^{k-1}}{d-1}[(k-1)(d-1)d^{k-1}-d^{k-1}+1]\\
&=&\frac{d^{k-1}}{d-1}[d^{k-1}(kd-k-d)+1].
\end{eqnarray*}
The proof is completed.
\end{proof}

\section{\large\bf{Terminal Wiener index versus maximum degree in trees}}

  Let $\mathcal{T}(n,\Delta)$ denote the set of all the
  trees of order $n$ and with maximum degree $\Delta$.
In this section, we will characterize the trees with the minimum
 terminal Wiener index in $\mathcal{T}(n,\Delta)$.

\setlength{\unitlength}{1mm}
\begin{picture}(120, 40)
\centering\thicklines
\put(20,20){\circle{15}}
\put(75,20){\circle{15}}
\put(27,20){\circle*{1}}
\put(30,25){\circle*{1}}
\put(35,25){\circle*{1}}
\put(41.5,25){\circle*{1}}
\put(46.5,25){\circle*{1}}
\put(82,20){\circle*{1}}
\put(85,25){\circle*{1}}
\put(90,25){\circle*{1}}
\put(96.5,25){\circle*{1}}

\put(30,15){\circle*{1}}
\put(35,15){\circle*{1}}

\put(46.5,15){\circle*{1}}
\put(51.5,15){\circle*{1}}

\put(85,15){\circle*{1}}
\put(90,15){\circle*{1}}

\put(101.5,15){\circle*{1}}
\put(106.5,15){\circle*{1}}
\put(111.5,15){\circle*{1}}

\put(27,20){\line(3,5){3}}
\put(27,20){\line(3,-5){3}}
\put(30,25){\line(1,0){5}}
\put(35,25){\line(1,0){2}}
\put(37.5,25){\line(1,0){0.5}}
\put(38.5,25){\line(1,0){0.5}}
\put(39.5,25){\line(1,0){0.5}}
\put(39.5,25){\line(1,0){2}}
\put(41.5,25){\line(1,0){5}}

\put(30,15){\line(1,0){5}}
\put(35,15){\line(1,0){3}}
\put(39,15){\line(1,0){0.5}}
\put(40.5,15){\line(1,0){0.5}}
\put(42,15){\line(1,0){0.5}}

\put(43.5,15){\line(1,0){3}}

\put(46.5,15){\line(1,0){5}}

\put(82,20){\line(3,5){3}}
\put(82,20){\line(3,-5){3}}
\put(85,25){\line(1,0){5}}
\put(90,25){\line(1,0){2}}
\put(92.5,25){\line(1,0){0.5}}
\put(93.5,25){\line(1,0){0.5}}
\put(94.5,25){\line(1,0){2}}

\put(85,15){\line(1,0){5}}
\put(90,15){\line(1,0){3}}
\put(94,15){\line(1,0){0.5}}
\put(95.5,15){\line(1,0){0.5}}
\put(97,15){\line(1,0){0.5}}
\put(98.5,15){\line(1,0){3}}
\put(101.5,15){\line(1,0){5}}
\put(106.5,15){\line(1,0){5}}

\put(16,19){$R$}
\put(24,19){$r$}
\put(28,26){$u_1$}
\put(33,26){$u_2$}
\put(38,26){$u_{a-1}$}
\put(46.5,26){$u_a$}
\put(28,12){$v_1$}
\put(33,12){$v_2$}
\put(43,12){$v_{b-1}$}
\put(52,12){$v_{b}$}

\put(71,19){$R$}
\put(79,19){$r$}
\put(83,26){$u_1$}
\put(88,26){$u_2$}
\put(94.5,26){$u_{a-1}$}

\put(83,12){$v_1$} \put(88,12){$v_2$} \put(98,12){$v_{b-1}$}
\put(105.5,12){$v_{b}$} \put(111,12){$u_a$}
\put(30,5){\small{$T_{a,b}$ }} \put(90,5){\small{$T_{a-1,b+1}$ }}
\put(41,0){\small{Figure 2~~~$T_{a,b}$ and $T_{a-1,b+1}$}}
\end{picture}

In order to prove our main result,  we introduce a tree
transformation. Let $T_{a,b}$ and $T_{a-1,b+1}$ be the trees
depicted in Figure 2, where $b>a\ge 1$ are integers and $R$ is a
rooted tree with root $r$ and at least two vertices. Gutman,
Vuki\u{c}evi\'{c} and Petrovi\'{c} proved

\begin{lemma}(\cite{gutman2004}) Let $b>a\ge 1$. Then
\label{lemma1.1}
 $$W(T_{a,b})<W(T_{a-1,b+1}).$$
\end{lemma}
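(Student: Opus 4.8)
The statement to prove is Lemma~\ref{lemma1.1}: for $b > a \ge 1$, the transformation that moves one vertex from the shorter pendent path of length $a$ to the longer pendent path of length $b$ (both paths attached at the same vertex $r$ of a fixed rooted tree $R$) strictly increases the Wiener index. My plan is to compute $W(T_{a-1,b+1}) - W(T_{a,b})$ directly using Wiener's edge formula~(\ref{equ1}), $W(T) = \sum_{e \in T} n_1(e) n_2(e)$, by comparing the contribution of each edge in the two trees. The two trees have the same edge set except that $T_{a,b}$ has the path $r u_1 \cdots u_a$ and the path $r v_1 \cdots v_b$, while $T_{a-1,b+1}$ has $r u_1 \cdots u_{a-1}$ and $r v_1 \cdots v_{b+1}$. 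Let $N = |V(R)|$ and $n = N + a + b$ be the total order, the same for both trees.

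First I would observe that for every edge $e$ lying inside $R$, the cut $(n_1(e), n_2(e))$ is identical in both trees, since the two pendent paths together contribute $a + b$ vertices on the same side of $e$ regardless of how those $a+b$ vertices are split between the two paths. So all of $R$'s internal edges cancel in the difference, and we only need to account for the $a + b$ edges on the two pendent paths. For the edge of $T_{a,b}$ joining $u_{i-1}$ to $u_i$ (with $u_0 = r$), the small side has exactly $a - i + 1$ vertices, contributing $(a-i+1)(n - a + i - 1)$; summing over $i = 1, \ldots, a$ and likewise over the $v$-path gives $W(T_{a,b})$ restricted to pendent-path edges as $\sum_{j=1}^{a} j(n-j) + \sum_{j=1}^{b} j(n-j)$. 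The analogous sum for $T_{a-1,b+1}$ is $\sum_{j=1}^{a-1} j(n-j) + \sum_{j=1}^{b+1} j(n-j)$.

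Taking the difference, all terms cancel except one term from each side: $W(T_{a-1,b+1}) - W(T_{a,b}) = (b+1)(n - b - 1) - a(n - a)$. The key step is then to show this quantity is strictly positive whenever $b \ge a$ (indeed $b + 1 > a$). Writing $f(x) = x(n-x)$, this is $f(b+1) - f(a)$, and since $f$ is a downward parabola symmetric about $x = n/2$, I need $b+1$ and $a$ to be positioned so that $f(b+1) > f(a)$; because $a + (b+1) \le a + b + (a+b) \cdots$ — more carefully, $a \le b < b+1$ and $a + (b+1) \le N + a + b = n$ would not quite suffice, so I would instead factor directly: $f(b+1) - f(a) = (b+1-a)(n - (b+1) - a) + \text{correction}$, or simply expand to get $(b+1-a)\bigl(n - a - b - 1\bigr) + (b+1-a) \cdot (\text{something})$. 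The cleanest route: $f(b+1) - f(a) = (b+1)(n-b-1) - a(n-a) = n(b+1-a) - (b+1)^2 + a^2 = n(b+1-a) - (b+1-a)(b+1+a) = (b+1-a)\bigl(n - a - b - 1\bigr) = (b+1-a)(N - 1)$. Since $b \ge a$ gives $b + 1 - a \ge 1 > 0$, and $R$ has at least two vertices so $N - 1 \ge 1 > 0$, the product is strictly positive, giving $W(T_{a,b}) < W(T_{a-1,b+1})$.

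The routine part is the bookkeeping of which edges contribute what; the only genuine content is the factorization $f(b+1) - f(a) = (b+1-a)(N-1)$ and the observation that the hypothesis "$R$ has at least two vertices" is exactly what makes $N - 1$ nonzero — so I would be careful to flag where that hypothesis is used, since without it (i.e. if $R$ were a single vertex) the difference would vanish and the inequality would fail to be strict. No real obstacle is expected; the proof is a short computation once the edge-cancellation observation is made.
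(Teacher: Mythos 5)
Your proof is correct. Note, however, that the paper does not prove this lemma at all: it is quoted verbatim from the cited reference of Gutman, Vuki\v{c}evi\'{c} et al., so there is no in-paper argument to compare against. Your derivation via Wiener's edge formula (\ref{equ1}) is complete and sound: the internal edges of $R$ contribute identically to both trees because the two pendent paths place the same total of $a+b$ vertices on the $r$-side of every such cut, the path edges contribute $\sum_{j=1}^{a}j(n-j)+\sum_{j=1}^{b}j(n-j)$ versus $\sum_{j=1}^{a-1}j(n-j)+\sum_{j=1}^{b+1}j(n-j)$, and the difference telescopes to $(b+1)(n-b-1)-a(n-a)=(b+1-a)(N-1)$ with $N=|V(R)|$. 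Your observation that strictness hinges precisely on the hypothesis that $R$ has at least two vertices (so $N-1\ge 1$) is exactly the right point to flag --- if $R$ were a single vertex the two trees would be isomorphic paths-with-a-center and the difference would vanish --- and your computation in fact establishes the slightly stronger statement that the inequality already holds for $b\ge a$, not only $b>a$. The only cosmetic criticism is the hesitation in the middle of your third paragraph about how to compare $f(b+1)$ with $f(a)$; the final factorization you settle on is the clean argument and the earlier groping could simply be deleted.
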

However, the above result is not true for terminal Wiener index. In fact,
\begin{lemma}
\label{lemma1.2}
 If $b>a>1$, then
 $$TW(T_{a,b})=TW(T_{a-1,b+1}).$$
\end{lemma}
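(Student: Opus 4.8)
The plan is to compute $TW(T_{a,b})$ and $TW(T_{a-1,b+1})$ directly from the definition, partitioning the pairs of pendent vertices into three blocks and checking that each block contributes the same amount to the two trees. The structural fact that makes this work is that, because $a>1$, both attached paths are nonempty in each of the two trees, so $r$ has degree $\deg_R(r)+2\ge 2$ in $T_{a,b}$ and in $T_{a-1,b+1}$; hence $r$ is a pendent vertex of neither. Consequently the pendent vertices of $R$ that survive are exactly the leaves of $R$ different from $r$; call this set $S$. In addition each of the two trees has precisely two further pendent vertices at the ends of the two attached paths: in $T_{a,b}$ these are $u_a$ and $v_b$, at distances $a$ and $b$ from $r$; in $T_{a-1,b+1}$ they are $u_{a-1}$ and $u_a$, at distances $a-1$ and $b+1$ from $r$. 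The single identity $a+b=(a-1)+(b+1)$ will drive everything.

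First I would treat the pairs with both ends in $S$: in a tree the $x$--$y$ path is unique, and for $x,y\in V(R)$ it lies inside $R$, so $d_{T_{a,b}}(x,y)=d_R(x,y)=d_{T_{a-1,b+1}}(x,y)$; this block of the sum is literally identical in the two trees. Next, for a fixed $x\in S$, the distance from $x$ to a path-end leaf is $d_R(x,r)$ plus the length of that path; so the two terms pairing $x$ with the path-ends sum to $2d_R(x,r)+(a+b)$ in $T_{a,b}$ and to $2d_R(x,r)+\bigl((a-1)+(b+1)\bigr)=2d_R(x,r)+(a+b)$ in $T_{a-1,b+1}$. Summing over all $x\in S$ gives the same contribution in both trees. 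Finally, the single pair formed by the two path-end leaves contributes $d(u_a,v_b)=a+b$ in $T_{a,b}$ and $d(u_{a-1},u_a)=(a-1)+(b+1)=a+b$ in $T_{a-1,b+1}$. Adding the three blocks yields $TW(T_{a,b})=TW(T_{a-1,b+1})$.

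There is no genuine obstacle; the only point needing care — and exactly where the hypothesis $a>1$ enters — is the bookkeeping of which vertices are pendent. If $a=1$ were allowed, the $u$-path would vanish in $T_{a-1,b+1}=T_{0,b+1}$, the number of pendent vertices would drop by one, and the equality would fail, so I would state at the outset that $a>1$ makes both attached paths nonempty in both trees (so $u_a$, resp. $u_{a-1}$, is genuinely a leaf and $r$ is not), and then the three-line distance computation above finishes it. Equivalently, the whole argument can be summarized as: for $a\ge 1$ the quantity $TW(T_{a,b})$ depends on $a,b$ only through $a+b$, and since $a+b=(a-1)+(b+1)$ with $a-1\ge 1$, the two indices coincide.
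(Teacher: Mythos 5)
Your proof is correct and follows essentially the same route as the paper's: both decompose the terminal Wiener index into pairs of leaves inside $R$, pairs joining a leaf of $R$ to a path-end, and the single pair of path-ends, and both reduce the claim to the identity $a+b=(a-1)+(b+1)$. Your explicit bookkeeping of which vertices are pendent (and why $a>1$ is needed) is a small clarification of what the paper leaves implicit, but not a different argument.
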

\begin{proof} Suppose that there are $k$ pendent vertices of $R$ which are
labelled by $x_1,x_2,\cdots ,x_k$. Then
\begin{eqnarray*}
TW(T_{a,b})&=&\sum\limits_{1\leq x_i<x_j\leq k}d(x_i,x_j)+\sum\limits_{i=1}^{k}d(x_a,x_i)+\sum\limits_{i=1}^{k}d(v_b,x_i)+a+b\\
&=&\sum\limits_{1\leq x_i<x_j\leq k}d(x_i,x_j)+2\sum\limits_{i=1}^{k}d(r,x_i)+(a+b)k+a+b\\
\end{eqnarray*}
and
\begin{eqnarray*}
TW(T_{a-1,b+1})&=&\sum\limits_{1\leq x_i<x_j\leq k}d(x_i,x_j)+\sum\limits_{i=1}^{k}d(u_{a-1},x_i)+\sum\limits_{i=1}^{k}d(x_a,x_i)+a+b\\
&=&\sum\limits_{1\leq x_i<x_j\leq k}d(x_i,x_j)+2\sum\limits_{i=1}^{k}d(r,x_i)+(a+b)k+a+b.\\
\end{eqnarray*}
It is easy to see that $TW(T_{a,b})=TW(T_{a-1,b+1})$. The proof is
completed.
\end{proof}

 \begin{lemma}
\label{lemma1.3}
 If $b>a=1$, then
 $$TW(T_{a,b})>TW(T_{a-1,b+1}).$$
\end{lemma}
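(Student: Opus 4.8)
The plan is to evaluate $TW(T_{1,b})$ and $TW(T_{0,b+1})$ directly, reusing the bookkeeping from the proof of Lemma~\ref{lemma1.2}, and then to subtract. The point to watch is that the case $a=1$ is genuinely different from the case $a>1$: the tree $T_{0,b+1}$ has one fewer pendent vertex than $T_{1,b}$, and the resulting asymmetry is exactly what forces the strict inequality.

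As in the proof of Lemma~\ref{lemma1.2}, let $x_1,\dots,x_k$ be the pendent vertices of $R$ (equivalently, the pendent vertices of $T_{1,b}$ lying in $R$), and set $D=\sum_{1\le i<j\le k}d(x_i,x_j)$ and $P=\sum_{i=1}^{k}d(r,x_i)$; note that the distances among vertices of $R$, and between $r$ and the $x_i$, are unchanged when pendent paths are attached at $r$. In $T_{1,b}$ the root $r$ carries a pendent path of length $1$ ending at $u_1$ and a pendent path of length $b$ ending at $v_b$, so the pendent vertices of $T_{1,b}$ are $x_1,\dots,x_k,u_1,v_b$. Using $d(u_1,x_i)=1+d(r,x_i)$, $d(v_b,x_i)=b+d(r,x_i)$ and $d(u_1,v_b)=b+1$ — this is just the formula of Lemma~\ref{lemma1.2} specialized to $a=1$ — one gets
$$TW(T_{1,b})=D+2P+(b+1)k+(b+1).$$

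For $T_{0,b+1}$ the key observation is that moving the single vertex $u_1$ to the end of the $v$-path makes $v_b$ an internal vertex, so $T_{0,b+1}$ is simply $R$ with one pendent path $v_1,\dots,v_b,u_a$ of length $b+1$ attached at $r$; its only pendent vertices are $x_1,\dots,x_k$ and the far endpoint $u_a$, which lies at distance $b+1$ from $r$. Hence $d(u_a,x_i)=(b+1)+d(r,x_i)$ and
$$TW(T_{0,b+1})=D+(b+1)k+P.$$
Subtracting, $TW(T_{1,b})-TW(T_{0,b+1})=P+(b+1)$, which is strictly positive since $P\ge 0$ and $b\ge 2$; this proves the claim.

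There is no real obstacle in the computation. The only thing that needs attention is the enumeration of pendent vertices: for $b>a>1$ the trees $T_{a,b}$ and $T_{a-1,b+1}$ have identical pendent-vertex sets and the distance sums cancel, giving equality (Lemma~\ref{lemma1.2}), whereas for $a=1$ the tree $T_{0,b+1}$ has only $k+1$ pendent vertices instead of $k+2$ (the vertex $v_b$ is no longer pendent), and the leftover term $P+(b+1)$ is positive.
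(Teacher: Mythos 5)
Your proof is correct and takes essentially the same route as the paper's: both compute $TW(T_{1,b})=D+2P+(b+1)k+(b+1)$ and $TW(T_{0,b+1})=D+P+(b+1)k$ with $D=\sum_{1\le i<j\le k}d(x_i,x_j)$ and $P=\sum_{i=1}^{k}d(r,x_i)$, then subtract to obtain $P+(b+1)>0$. You merely spell out the pendent-vertex bookkeeping (in particular that $v_b$ ceases to be pendent in $T_{0,b+1}$) that the paper leaves implicit.
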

\begin{proof} Suppose that there are $k$ pendent vertices of $R$ which are
 labelled by $x_1,x_2,\cdots ,x_k$. Then
\begin{eqnarray*}
TW(T_{a,b})
&=&\sum\limits_{1\leq x_i<x_j\leq k}d(x_i,x_j)+2\sum\limits_{i=1}^{k}d(r,x_i)+(b+1)k+b+1\\
\end{eqnarray*}
and
\begin{eqnarray*}
TW(T_{a-1,b+1})
&=&\sum\limits_{1\leq x_i<x_j\leq k}d(x_i,x_j)+\sum\limits_{i=1}^{k}d(r,x_i)+(b+1)k.\\
\end{eqnarray*}
So $TW(T_{a,b})-TW(T_{a-1,b+1})=\sum\limits_{i=1}^{k}d(r,x_i)+b+1>0$. The proof is completed.
\end{proof}

  A tree is said to be {\it starlike} of degree $k$ if exactly one of its vertices has
degree greater than two, and the degree is equal to $k\geq3$.

\begin{theorem}
\label{theorem1.3}
If $T$ is a tree in $\mathcal{T}(n,\Delta)(\Delta\geq3)$, then
$$TW(T)\ge (n-1)(\Delta-1)$$
with equality if and only if $T$ is starlike of order $n$ with the maximum degree $\Delta$.
\end{theorem}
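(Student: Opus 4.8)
The plan is to work with the edge formula (\ref{eqn2}), $TW(T)=\sum_{e\in E(T)}p_1(e)p_2(e)$, where $p_1(e)$ and $p_2(e)$ are the numbers of pendent vertices of $T$ lying in the two components of $T-e$. I would first record two elementary facts. Each component of $T-e$ is a subtree joined to the rest of $T$ by the single edge $e$, so all but at most one of its own leaves are pendent vertices of $T$; hence $p_1(e),p_2(e)\ge 1$ and $p_1(e)+p_2(e)=l$, where $l$ denotes the number of pendent vertices of $T$. Applying the same argument to the $\Delta$ subtrees obtained by deleting a vertex $v$ of degree $\Delta$ (which is not itself pendent, as $\Delta\ge 3$) shows that $l\ge\Delta$.

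With these in hand the bound is immediate: for every edge, $p_1(e)p_2(e)\ge 1\cdot(l-1)=l-1$, so summing over the $n-1$ edges of $T$ and using $l\ge\Delta$ gives
$$TW(T)=\sum_{e\in E(T)}p_1(e)p_2(e)\ge (n-1)(l-1)\ge (n-1)(\Delta-1).$$

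For the equality discussion, equality in this chain forces $l=\Delta$ and $p_1(e)p_2(e)=l-1$ for every edge; combined with $p_1(e)+p_2(e)=l$, the latter means $\{p_1(e),p_2(e)\}=\{1,l-1\}$, i.e. every edge of $T$ has a side containing exactly one pendent vertex. I would then check that a starlike tree of degree $\Delta$, with legs of lengths $\ell_1,\ldots,\ell_\Delta$ ($\sum_i\ell_i=n-1$), meets both requirements: it has exactly $\Delta$ pendent vertices and deleting any edge cuts off a subpath ending in one leaf; as a consistency check $TW=\sum_{i<j}(\ell_i+\ell_j)=(\Delta-1)\sum_i\ell_i=(\Delta-1)(n-1)$. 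Conversely, assume both requirements hold and let $v$ be a vertex of degree $\Delta$ (which exists since $\Delta\ge 3$). For each edge $vw_i$, the component $C_i$ of $T-vw_i$ not containing $v$ contains $p(C_i)\in\{1,\Delta-1\}$ pendent vertices of $T$; since the $p(C_i)$ sum to $l=\Delta$ and there are $\Delta-1\ge 2$ indices $i$ other than any given one, no $p(C_i)$ can equal $\Delta-1$, so $p(C_i)=1$ for all $i$. A subtree containing exactly one pendent vertex of $T$ has at most two leaves of its own — that pendent vertex and possibly the attachment vertex $w_i$ — hence is a path; therefore every branch at $v$ is a path and $T$ is starlike of degree $\Delta$.

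The routine part is the inequality itself. The delicate point is the converse in the equality case, where one must convert the purely numerical condition ``every edge has a side with exactly one pendent vertex'' (together with $l=\Delta$) into the structural conclusion that $T$ is a spider; this rests on the counting argument at the degree-$\Delta$ vertex and on the observation that a subtree containing a single pendent vertex of $T$ is necessarily a path. I expect this converse to be the main obstacle, though it is not deep.
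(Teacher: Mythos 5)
Your proof is correct, and it takes a genuinely different route from the paper. The paper does not argue via the edge formula at all: it invokes the path-shifting transformation $T_{a,b}\to T_{a-1,b+1}$ of Figure 2 (Lemmas \ref{lemma1.2} and \ref{lemma1.3}, which show the terminal Wiener index is unchanged when $b>a>1$ and strictly decreases when $b>a=1$), repeatedly straightening the non-path branches at a maximum-degree vertex into pendant paths until a starlike tree is reached, and then computes $TW=(n-1)(\Delta-1)$ for starlike trees directly. Your argument instead bounds each summand of $TW(T)=\sum_{e}p_1(e)p_2(e)$ below by $l-1$ using $p_1(e),p_2(e)\ge 1$ and $p_1(e)+p_2(e)=l$, and combines this with $l\ge\Delta$; the equality analysis then reads off the structure at a degree-$\Delta$ vertex. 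Your approach buys a cleaner and more self-contained proof: it avoids the iteration over transformations (whose termination and preservation of the maximum degree the paper leaves implicit), and it yields the stronger intermediate bound $TW(T)\ge(n-1)(l-1)$ in terms of the number $l$ of pendent vertices, with equality forcing every edge to separate exactly one leaf from the rest. The paper's approach, in exchange, produces the transformation lemmas as statements of independent interest, contrasting the behaviour of $TW$ with Gutman--Vuki\v{c}evi\'c--Petrovi\'c's strict monotonicity of $W$ under the same operation. One small point to make explicit in your converse: having shown each branch $C_i$ at $v$ contains exactly one pendent vertex of $T$ and is therefore a path, you should note that the attachment vertex $w_i$ must be an endpoint of that path (otherwise $C_i$ would have two leaves distinct from $w_i$, hence two pendent vertices of $T$), which is what guarantees every vertex other than $v$ has degree at most two; your leaf count already contains this, but it deserves a sentence.
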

\begin{proof}
Since $T\in\mathcal{T}(n,\Delta)$, there exists at least one vertex labelled by $v$
such that $d(v)=\Delta$. So there are $\Delta$ branches of $T-v$.
If $T$ is not a starlike tree, there exist some branches of $T$ at $v$ that are not paths. Hence by Lemmas~\ref{lemma1.2} and~\ref{lemma1.3}, there exist a starlike tree $T_1$ of order $n$ with the maximum degree $\Delta$ such that $TW(T)>TW(T_1)$.
Moreover, any two starlike trees of order $n$ with the maximum degree $\Delta$ have the same terminal Wiener index, which is equal to $(n-1)(\Delta-1)$. Hence the proof is completed.\end{proof}

\end {document}